\documentclass[a4paper]{article}

\usepackage{geometry}%pour ne pas trop depasser les 5 pages, geometry a l'air plus efficace que fullpage
\usepackage[utf8]{inputenc}%\usepackage[latin1]{inputenc}
\usepackage{subfigure}
\usepackage{amsfonts,amssymb,amsmath,dsfont,amsthm}
\usepackage{enumitem}% pour avoir des itemize plus compacts
\usepackage{xspace}% pour définir les commandes \ie et \eg
\usepackage{graphicx}
\usepackage[colorlinks=true,citecolor=blue]{hyperref}
\usepackage{bm}% to have bold math symbols

\newtheorem{theorem}{Theorem}
\newtheorem{lemma}{Lemma}
\newtheorem{corollary}{Corollary}

\newcommand{\proba}{\mathds{P}}
\newcommand{\esp}{\mathds{E}}
\newcommand{\bigO}{\mathcal{O}}

\newcommand{\smallo}{o}

\newcommand{\mA}{\mathcal{A}}
\newcommand{\mB}{\mathcal{B}}
\newcommand{\mC}{\mathcal{C}}

\newcommand{\mF}{\mathcal{F}}
\newcommand{\mH}{\mathcal{H}}

%{\mathbf{#1}}

\newcommand*{\ie}{\textit{i.e.}\@\xspace}

\newcommand{\sg}{\operatorname{SG}}
\newcommand{\mg}{\operatorname{MG}}
\newcommand{\Patch}{\operatorname{Patch}}

\title{Threshold functions for small subgraphs: an analytic approach}

\author{
Gwendal Collet\footnote{Institute of Discrete Mathematics and Geometry, TU Wien (Austria). 
Partially supported by the Austrian Science Foundation FWF, grant SFB F50-02.
},
\'Elie de Panafieu\footnote{Nokia Bell Labs and LINCS (France). 
This work was partially founded by the Austrian Science Fund (FWF) grant F5004, the Amadeus program and the PEPS HYDrATA.
},
Dani\`ele Gardy\footnote{DAVID Laboratory, University of Versailles Saint Quentin (France). 
Partially supported by the Amadeus project 33697ZK \emph{Threshold problems and phase transitions in graph-like structures} (2015--16), by the PICS project \emph{Constraint analysis through analytic combinatorics} (2017--19), and by the ANR-MOST MetaConc (2015--19).
},\\
Bernhard Gittenberger\footnote{Institute of Discrete Mathematics and Geometry, TU Wien, Wiedner Hauptstr. 8--10/104, 1040 Wien, Austria. Partially supported by the Austrian Science Foundation FWF, grant SFB F50-03 and the \"OAD grant Amadée F01/2015.}
,
Vlady Ravelomanana\footnote{IRIF, University of Paris 7 (France). 
Partially supported by the Amadeus project 33697ZK \emph{Threshold problems and phase transitions in graph-like structures} (2015--16), by the project \emph{Combinatorics in Paris} (2014--17)  and by the PICS project \emph{Constraint analysis through analytic combinatorics} (2017--19).
}}

\begin{document}
\maketitle

\begin{abstract}
We revisit the problem of counting the number of copies of a fixed graph in a random graph or multigraph, including the case of constrained degrees.
Our approach relies heavily on analytic combinatorics and on the notion of \emph{patchwork} to describe the possible overlapping of copies.

This paper is a version, extended to include proofs, of the paper with the same title to be presented at the Eurocomb 2017 meeting.

\noindent \textbf{Keywords.} random graphs, subgraphs, analytic combinatorics, generating functions.
\end{abstract}

\section{Introduction}

Since the introduction of the random graph models $G(n,\, m)$ and $G(n,\, p)$ 
by Erd\H os and R\'enyi~\cite{ER60} in 1960, one of the most studied parameters 
is the number $X_F$ of subgraphs isomorphic to a given graph $F$.
% (formal definitions are given at the end of this section). 
By the asymptotic equivalence between $G(n,\, p)$ and $G(n,\, m)$,
results from one model can be rigorously translated into the other one.
%The distribution of $X_F$ has been studied extensively since the seminal work of Erd\H os and R\'enyi \cite{ER60}.
%Throughout this paper, $E(F)$ (resp.~$V(F)$)
%denotes the set of edges (resp.~vertices) of $F$.
Erd\H{o}s and R\'enyi derived the threshold for $\{X_F > 0\}$
when $F$ is a \emph{strictly balanced} graph (see definition next page),
and Bollob\'as~\cite{Bo81} generalized their result to any graph $F$.
%A general threshold for $\{X_F > 0\}$ was established in 1981 by Bollob\'as~\cite{Bo81},
%located at $p = n^{-1/\mu}$ where $\mu = \max_H\{|E(H)|/|V(H)|: H \subset F\}$.
%
Ruci{\'n}ski~\cite{Ru88} proved that $X_F$
%obtained a major result about the distribution of $X_F$: 
is asymptotically normal beyond the threshold,
%iff  $n p^{\mu} \rightarrow \infty$ and $n^2(1-p)\rightarrow \infty$;
and follows a Poisson law at the threshold iff $F$ is strictly balanced.
Then Janson, Oleszkiewicz and Ruci\'nski~\cite{JaOlRu04}
developed a moment-based method for estimating
$\proba( X_F \geq (1+\varepsilon) \esp(X_F) )$.
%this estimation is the best possible up to logarithmic factors in the exponent.
The notion of \emph{strongly balanced graphs},
introduced by Ruci{\'n}ski and Vince in~\cite{RuVi86},
plays a  key role in obtaining the results mentioned above.

Recently, there has been an increasing interest
in the study of constrained random graphs,
such as given degree sequences or regular graphs;
the number of given subgraphs in such structures has been also studied.
E.g., Wormald~\cite{wormald-survey} proved that the number of short cycles
in these structures asymptotically follows a Poisson distribution;
using a multi-dimensional saddle-point approach, McKay~\cite{mckay11}
studied the structure of a random graph with given degree sequence,
including the probability of a given subgraph or induced subgraph.

Our goal is to revisit (part of) these results
through analytic combinatorics and extensive use of generating functions (g.f.).
Ours is not the first paper that approaches graph problems with these tools.
Early such work was by McKay and Wormald
(see, e.g., \cite{McKayWormald90} for the enumeration
of graphs with a specified degree sequence);
%, where the generating function is on a number of variables equal to the number of vertices);
an important development was the study of planar graphs
by Gim\'enez and Noy~\cite{GimenezNoy09}, followed by several papers in the same direction; 
see also a recent paper by Drmota, Ramos and Ru\'e~\cite{DrmotaRamosRue}
about the limiting distribution of the number of copies of a subgraph in subcritical graphs.

In the rest of this section, we give formal definitions of our model and the objects we are interested in.
Then  we address the problem of evaluating the number of subgraphs
%, applying analytic combinatorics tools, i.e., generating function manipulations, 
in Section~\ref{sec:subgraphs};
finally some of those results are extended
to graphs and multigraphs with degree constraints
in Section~\ref{sec:degree_constraints}.

\medskip
In the rest of this section, we give formal definitions of our model and the objects we are interested in.
Then  we address the problem of evaluating the number of subgraphs, applying analytic combinatorics tools, \ie generating function manipulations, in Section~\ref{sec:subgraphs}.
Finally some of those results are extended to graphs and multigraphs with degree constraints in Section~\ref{sec:degree_constraints}.

\paragraph{Model and definitions.}

Most of the following definitions come from \cite{ER60} and \cite{Bo81}.
A graph $G$ is a pair $(V(G),E(G))$,
where $V(G)$ denotes the set of labeled vertices,
and $E(G)$ the set of edges.
Each edge is a unoriented pair of distinct vertices,
thus loops %(an edge containing twice the same vertex)
and multiple edges %(several edges linking the same pair of vertices)
are forbidden.
An $(n,m)$-graph is a graph with $n$ vertices,
labeled from $1$ to $n$, and $m$ edges.
A graph $F$ is a \emph{subgraph} of $G$
if $V(F) \subset V(G)$ and $E(F) \subset E(G)$.
We then write $F \subset G$.
Two graphs $F$, $G$ are \emph{isomorphic}
if there exists a bijection from $V(F)$ to $V(G)$
that induces a bijection between $E(F)$ and $E(G)$.
An \emph{$F$-graph} is a graph isomorphic to $F$,
an \emph{$F$-subgraph} of $G$ is a subgraph of $G$ that is an $F$-graph,
and $G[F]$ denotes the number of subgraphs of $G$ that are $F$-graphs.
Given a graph family $\mF$,
an \emph{$\mF$-graph} is an $F$-graph for some $F \in \mF$.
The \emph{density} $d(G)$ of a graph $G$ is the ratio
between its number of edges and of vertices.
A graph is \emph{strictly balanced}
if its density is larger than the density of its strict subgraphs.
The \emph{essential density} $d^\star(G)$ of $G$
is the highest density of its subgraphs
\[
    d^\star(G) = \max_{H \subset G} d(H).
\]
To any graph family $\mF$, we associate the generating function
\[
    F(z,w) = \sum_{n,m \geq 0} \mF_{n,m} w^m \frac{z^n}{n!},
\]
where $\mF_{n,m}$ denotes the number of $(n,m)$-graphs isomorphic to a graph from $\mF$.

\newpage

        \section{Number of subgraphs in a random graph} \label{sec:subgraphs}

   \paragraph{Graphs with one distinguished subgraph.}

\begin{theorem} \label{th:distinguished}
The number of $(n,m)$-graphs where one $\mF$-subgraph is distinguished is
\begin{equation} \label{eq:distinguished}
    n! [z^n w^m] F \left( z, \frac{w}{1+w} \right) e^z (1+w)^{\binom{n}{2}}
    \sim \binom{\binom{n}{2}}{m} F \left( n, \frac{m}{\binom{n}{2}} \right),
\end{equation}
where the asymptotics holds when
$F(z,w)$ is an entire function,
$m$ tends to infinity with $n$ while $m = \smallo(n^2)$,
%$m \sim c n^{\alpha}$ for some positive constants $c$ and $\alpha < 2$,
and $F(nz,mw/\binom{n}{2}) / F(n,m/\binom{n}{2})$ converges uniformly on any compact set
to an analytic function.
\end{theorem}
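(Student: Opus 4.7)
To derive the identity combinatorially, I would decompose a pair $(G, H)$---with $G$ an $(n,m)$-graph and $H$ a distinguished $\mF$-subgraph---into three ingredients: a vertex subset $V_H \subseteq [n]$, an $\mF$-graph structure on $V_H$, and a choice of remaining edges of $G$ in $\binom{[n]}{2} \setminus E(H)$. Weighting each edge of $G$ by $w$, the $H$-edges contribute $w^{e(H)}$ and the free edges contribute $(1+w)^{\binom{n}{2} - e(H)}$; pulling the uniform factor $(1+w)^{\binom{n}{2}}$ out leaves $(w/(1+w))^{e(H)}$ on the $H$-part, which amounts to evaluating the $\mF$ g.f.\ at $w/(1+w)$. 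The unused vertices are isolated and contribute $e^z$, and the ``choose a subset'' exponential rule assembles these factors into $F(z, w/(1+w))\, e^z (1+w)^{\binom{n}{2}}$. Extracting $n! [z^n w^m]$ yields the claimed exact count.

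\textbf{Asymptotic.} For the asymptotic, the plan is to expand $F$ as a power series and extract coefficients termwise. With $F(z, w) = \sum_{j, \ell} (\mF_{j, \ell}/j!)\, z^j w^\ell$ and the substitution $w \mapsto w/(1+w)$, the identity from the first part rewrites as
\[
n! [z^n w^m] F(z, w/(1+w))\, e^z (1+w)^N = \sum_{j, \ell} \frac{\mF_{j, \ell}}{j!} \cdot \frac{n!}{(n-j)!} \cdot \binom{N-\ell}{m-\ell},
\]
with $N = \binom{n}{2}$; here I have used $n! [z^n]\, e^z z^j = n!/(n-j)!$ and $[w^m]\, w^\ell (1+w)^{N-\ell} = \binom{N-\ell}{m-\ell}$. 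Each elementary factor has a classical limit: $(n!/(n-j)!)/n^j \to 1$ and $\binom{N-\ell}{m-\ell}/\bigl(\binom{N}{m} (m/N)^\ell\bigr) \to 1$ for every fixed $j, \ell$ as $n, m \to \infty$ with $m = \smallo(n^2)$. Pulling $\binom{N}{m}$ out and passing to the limit termwise then produces $\binom{N}{m} \sum_{j, \ell} (\mF_{j, \ell}/j!)\, n^j (m/N)^\ell = \binom{N}{m} F(n, m/N)$, the desired asymptotic.

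\textbf{Main obstacle.} The one technically delicate step is justifying the exchange of the termwise limit with the infinite double sum over $(j, \ell)$. This is precisely where the hypothesis enters: the uniform convergence of $F(nz, mw/N)/F(n, m/N)$ on compact polydisks to an analytic limit, combined with Cauchy's inequalities, bounds the rescaled coefficients $\mF_{j, \ell}\, n^j (m/N)^\ell/\bigl(j!\, F(n, m/N)\bigr)$ uniformly and thus supplies a dominated-convergence argument. Once that is in place, only routine tail estimates on $n!/\bigl((n-j)!\, n^j\bigr)$ and on $\binom{N-\ell}{m-\ell}/\bigl(\binom{N}{m}(m/N)^\ell\bigr)$ are needed to complete the proof.
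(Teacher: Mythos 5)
Your derivation of the exact formula coincides with the paper's: the same decomposition into a distinguished $\mF$-graph, isolated vertices contributing $e^z$, and free edges on the $\binom{n}{2}-e(H)$ remaining pairs, with the substitution $w\mapsto w/(1+w)$ absorbing the deficit in the exponent. For the asymptotics, however, you take a genuinely different route. The paper rescales $z\to nz$, $w\to mw/\binom{n}{2}$, writes $F\bigl(nz,\cdot\bigr)=F\bigl(n,m/\binom{n}{2}\bigr)\bigl(L(z,w)+\epsilon_n(z,w)\bigr)$, approximates $\bigl(1+\tfrac{m}{\binom{n}{2}}w\bigr)^{\binom{n}{2}}$ by $e^{mw}\bigl(1+\tilde\epsilon_n(w)\bigr)$, and concludes with a bivariate saddle-point step replacing $L(z,w)$ by $L(1,1)=1$. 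You instead stay entirely at the level of coefficients: with $N=\binom{n}{2}$, the exact count is $\sum_{j,\ell}\tfrac{\mF_{j,\ell}}{j!}\tfrac{n!}{(n-j)!}\binom{N-\ell}{m-\ell}$, compared termwise with $\binom{N}{m}\sum_{j,\ell}\tfrac{\mF_{j,\ell}}{j!}n^j(m/N)^\ell$. This works, and the details you leave as ``routine'' really are: the correction factors $\tfrac{n!/(n-j)!}{n^j}$ and $\binom{N-\ell}{m-\ell}/\bigl(\binom{N}{m}(m/N)^\ell\bigr)=\prod_{i=0}^{\ell-1}\tfrac{1-i/m}{1-i/N}$ both lie in $[0,1]$ and tend to $1$ for fixed $(j,\ell)$; the normalized coefficients $a^{(n)}_{j,\ell}=\tfrac{\mF_{j,\ell}}{j!}n^j(m/N)^\ell/F(n,m/N)$ are nonnegative and sum to $1$; and Cauchy's inequalities for $G_n(z,w)=F(nz,mw/N)/F(n,m/N)$ on a polydisk of radius $2$ give $a^{(n)}_{j,\ell}\le C\,2^{-j-\ell}$ uniformly in $n$, which is precisely the summable dominating sequence needed, with limit $\sum_{j,\ell}[z^jw^\ell]L=L(1,1)=1$. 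Your route buys two things: it makes the role of the uniform-convergence hypothesis completely transparent (it is exactly a domination/tightness condition on the rescaled coefficients), and, by keeping the binomial coefficients exact rather than passing through $\bigl(1+\tfrac{m}{N}w\bigr)^N\approx e^{mw}$ --- an approximation whose relative error is $1+\smallo(1)$ only when $m^2/N\to 0$, i.e.\ $m=\smallo(n)$ --- it covers the full claimed range $m=\smallo(n^2)$ without any delicate saddle-point justification. What the paper's route buys is reusability: its integral/saddle-point formulation is the one that carries over to the degree-constrained multigraph setting of Theorem~\ref{th:distinguished_degree_constraints}, where no closed form for the $w$-coefficients analogous to $\binom{N-\ell}{m-\ell}$ is available.
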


\begin{proof}
A graph on $n$ vertices where one $\mF$-subgraph is distinguished is
a copy of a graph $F$ from $\mF$,
a set of additional vertices,
and a set of additional edges.
Those edges can link any pair from the $n$ vertices,
except the pairs already linked in $F$.
The Symbolic Method (see \cite{FS09}) translates
this combinatorial description into
the generating function expression of the theorem.
The asymptotics is then extracted using a saddle-point method.
\end{proof}

Let $H$ denote a densest subgraph of $F$.
Theorem~\ref{th:distinguished} is now applied
with $\mF$ equal to the family of the $H$-graphs.
Dividing both sides of Equation~\eqref{eq:distinguished}
by the total number $\binom{\binom{n}{2}}{m}$ of $(n,m)$-graphs,
we obtain a new proof for the following classic result of \cite{ER60, Bo81}.

\begin{corollary} \label{th:distinguished_esp}
Denote by $\ell^{\star}$ and $d^{\star}$ the number of edges and density
of a densest subgraph of $F$,
and consider a random $(n,m)$-graph $G$
with $m = \bigO(n^{\alpha})$ for some fixed $0 < \alpha < 2$,
then
\[
    \esp(G[F]) = \bigO(n^{\ell^{\star}(\alpha - 2 + 1/d^\star)}).
\]
Thus, for any $\alpha < 2 - 1/d^\star$, $G[F] = 0$ almost surely.
\end{corollary}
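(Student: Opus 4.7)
The plan is to apply Theorem~\ref{th:distinguished} with $\mF$ equal to the family of all graphs isomorphic to $H$, where $H$ is a densest subgraph of $F$. Then $|V(H)|=\ell^{\star}/d^{\star}$ and $|E(H)|=\ell^{\star}$, and the bivariate generating function of $\mF$ collapses to the monomial
\[
F_{\mF}(z,w)=\frac{z^{\ell^{\star}/d^{\star}}\,w^{\ell^{\star}}}{|\operatorname{Aut}(H)|},
\]
which is entire. The normalising ratio $F_{\mF}(nz,mw/\binom{n}{2})/F_{\mF}(n,m/\binom{n}{2})$ equals $z^{\ell^{\star}/d^{\star}}w^{\ell^{\star}}$, independently of $n$ and $m$, so the analytic hypotheses of Theorem~\ref{th:distinguished} are satisfied automatically.

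Combinatorially, the left-hand side of~\eqref{eq:distinguished} counts pairs consisting of an $(n,m)$-graph together with a distinguished $H$-subgraph; this number equals $\binom{\binom{n}{2}}{m}\,\esp(G[H])$. Dividing~\eqref{eq:distinguished} by $\binom{\binom{n}{2}}{m}$ and substituting the monomial above yields
\[
\esp(G[H])\;\sim\;\frac{n^{\ell^{\star}/d^{\star}}}{|\operatorname{Aut}(H)|}\left(\frac{m}{\binom{n}{2}}\right)^{\!\ell^{\star}}\;=\;\bigO\!\left(n^{\ell^{\star}(\alpha-2+1/d^{\star})}\right),
\]
where I used $m/\binom{n}{2}=\bigO(n^{\alpha-2})$ together with the elementary identity $\ell^{\star}/d^{\star}+(\alpha-2)\ell^{\star}=\ell^{\star}(\alpha-2+1/d^{\star})$. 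This is the stated expectation bound, with the densest subgraph $H$ controlling the exponent; the corresponding estimate for $\esp(G[F])$ follows because each copy of $F$ in $G$ contributes a bounded number of $H$-copies, so the same exponent governs $\esp(G[F])$ in the threshold regime of interest.

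For the almost-sure conclusion, assume $\alpha<2-1/d^{\star}$, so that the exponent $\ell^{\star}(\alpha-2+1/d^{\star})$ is strictly negative and $\esp(G[H])\to 0$. Markov's inequality then gives $\proba(G[H]\geq 1)\le\esp(G[H])\to 0$. Because $H\subset F$, every copy of $F$ in $G$ contains a copy of $H$, so $G[F]\geq 1\Rightarrow G[H]\geq 1$, and hence $G[F]=0$ almost surely. The only step requiring any care is verifying the analytic hypotheses of Theorem~\ref{th:distinguished} for the degenerate monomial $F_{\mF}$, which is immediate here; everything else reduces to a routine exponent computation and a single application of Markov's inequality.
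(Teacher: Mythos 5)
Your computation of $\esp(G[H])$ follows exactly the paper's route: apply Theorem~\ref{th:distinguished} to the family of graphs isomorphic to a densest subgraph $H$ of $F$, whose generating function is the monomial $\frac{1}{\mathfrak{a}} w^{\ell^\star} z^{k^\star}$ with $k^\star = \ell^\star/d^\star$, observe that the analytic hypotheses of the theorem hold trivially for a monomial, divide Equation~\eqref{eq:distinguished} by $\binom{\binom{n}{2}}{m}$, and read off the exponent. Your treatment of the almost-sure statement---Markov's inequality applied to $G[H]$ combined with the inclusion $\{G[F] \geq 1\} \subseteq \{G[H] \geq 1\}$---is correct; it is the first-moment argument that the paper leaves implicit.

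The genuine gap is the transfer from $\esp(G[H])$ to $\esp(G[F])$. You write that ``each copy of $F$ in $G$ contributes a bounded number of $H$-copies, so the same exponent governs $\esp(G[F])$'', but this counts incidences in the wrong direction: the number of pairs (copy of $F$, copy of $H$ inside it) equals $G[F]$ times a constant, and to bound $G[F]$ in terms of $G[H]$ you would need the number of $F$-copies through a \emph{fixed} $H$-copy to be bounded, which it is not---it can be polynomial in $n$. In fact no repair is possible, because the displayed expectation bound fails in general: take $F$ to be $K_4$ with a pendant edge, so that $H = K_4$, $k^\star = 4$, $\ell^\star = 6$, $d^\star = 3/2$. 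With $m \asymp n^\alpha$ and $p = m/\binom{n}{2}$, one has $\esp(G[F]) \asymp n^5 p^7 \asymp n^{5+7(\alpha-2)}$, which for $1 < \alpha < 2$ exceeds the claimed $\bigO(n^{4+6(\alpha-2)})$; at $\alpha = 4/3$ the true expectation grows like $n^{1/3}$ while the claimed bound is $\bigO(1)$, the point being that a single $K_4$ typically carries $\Theta(np)$ pendant edges. You should know that the paper's own proof makes the very same leap, asserting $\esp(G[F]) \leq \esp(G[H])$ on the grounds that $G$ contains $F$ only if it contains $H$; that implication only yields $\proba(G[F]>0) \leq \proba(G[H]>0)$, which is precisely what your Markov step uses and is all that the threshold conclusion requires. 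So your proof of the final (and essential) assertion stands, while the first display is established neither by your argument nor by the paper's.
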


   \paragraph{Graphs with marked subgraphs.}\hfill
   
Given a graph $F$, an \emph{$F$-patchwork} $P$ is
a set of distinct $F$-graphs $\{F_1, \ldots, F_t\}$
that might share vertices and edges,
and such that the pair $\left( \cup_{i=1}^t V(F_i), \cup_{i=1}^t E(F_i) \right)$
is a graph, denoted by $G(P)$.
This notion is illustrated in Figure~\ref{fig:patchwork}.
\begin{figure}
\begin{center}
\includegraphics[width=11cm]{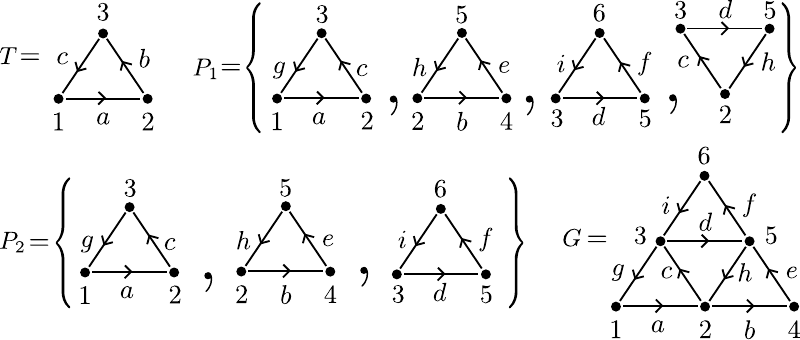}
\caption{A graph $T$ and two $T$-patchworks $P_1$ and $P_2$ that reduce to the same graph $G(P_1) = G(P_2) = G$.} \label{fig:patchwork}
\end{center}
\end{figure}
Let $\Patch_{F,n,m,t}$ denote the number
of $F$-patchworks that are composed of $t$ $F$-graphs,
and such that $G(P)$ is an $(n,m)$-graph.
Then the generating function of $F$-patchworks is defined as
\[
    \Patch_F(z,w,u) =
    \sum_{n,m,t \geq 0}
    \Patch_{F,n,m,t}
    u^t
    w^m
    \frac{z^n}{n!}.
\]

\begin{theorem} \label{th:subgraphs}
The number $\sg_{n,m,t}^F$ of $(n,m)$-graphs that contain exactly $t$ $F$-subgraphs is
\begin{equation} \label{eq:subgraphs}
    \sg_{n,m,t}^F =
    n! [z^n w^m u^t] \Patch_{F} \left( z, \frac{w}{1+w}, u-1 \right) e^z (1+w)^{\binom{n}{2}}.
\end{equation}
\end{theorem}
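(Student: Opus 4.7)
The plan is to reduce the enumeration of $(n,m)$-graphs with exactly $t$ $F$-subgraphs to the enumeration of $(n,m)$-graphs carrying $j$ \emph{marked} $F$-subgraphs, and then to invert the latter via a $u \mapsto u-1$ substitution. The first task reuses the decomposition underlying Theorem~\ref{th:distinguished}, with a single distinguished $F$-subgraph replaced by an $F$-patchwork of $j$ parts.

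First I would count pairs $(G,S)$ where $G$ is an $(n,m)$-graph and $S$ is a set of $j$ distinct $F$-subgraphs of $G$. Such a pair decomposes uniquely into an $F$-patchwork $P$ with $j$ parts (whose underlying graph $G(P)$ encodes $S$), together with the vertices of $G$ outside $V(P)$ and the edges of $G$ outside $E(P)$. The EGF of patchworks plus isolated extra vertices is $\Patch_F(z,w,u)\,e^z$; to incorporate the extra edges I would apply the same trick as in Theorem~\ref{th:distinguished}, namely substitute $w\mapsto w/(1+w)$ and multiply the $z^n$-coefficient by $(1+w)^{\binom{n}{2}}$. This distributes as $w^{m_P}(1+w)^{\binom{n}{2}-m_P}$: the patchwork edges are locked in, while each of the $\binom{n}{2}-m_P$ remaining pairs independently chooses to be an extra edge or not. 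Writing $\mg_{n,m,j}^F$ for the number of such marked configurations, I arrive at
\[
\mg_{n,m,j}^F \;=\; n!\,[z^n w^m u^j]\,\Patch_F\!\left(z,\tfrac{w}{1+w},u\right) e^z (1+w)^{\binom{n}{2}}.
\]

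Second, I would pass from ``marked'' to ``exact'' by binomial inversion. Since an $(n,m)$-graph with exactly $t$ $F$-subgraphs contributes $\binom{t}{j}$ to $\mg_{n,m,j}^F$, one has
\[
\sum_j \mg_{n,m,j}^F\, u^j \;=\; \sum_t \sg_{n,m,t}^F (1+u)^t,
\]
so replacing $u$ by $u-1$ inverts the relation and makes $\sg_{n,m,t}^F$ the coefficient of $u^t$. Applying this substitution to the identity above yields precisely Equation~\eqref{eq:subgraphs}.

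The main obstacle is the bijective verification behind the first step: I must check that the patchwork $P$, together with the extra vertices and extra edges, faithfully reconstructs the pair $(G,S)$, and in particular that edges shared by several copies in $S$ appear only once in $E(P)$ so that the $w/(1+w)$ substitution does not double-count an edge both as ``patchwork'' and as ``extra''. Once the decomposition is confirmed to be a bijection in the Flajolet--Sedgewick sense, the remainder is routine symbolic manipulation.
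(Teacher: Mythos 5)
Your proposal is correct and follows essentially the same route as the paper: counting configurations with a marked set of $F$-subgraphs (which is exactly an $F$-patchwork distinguished in the graph, handled by the substitution trick of Theorem~\ref{th:distinguished} with $\Patch_F$ in place of $F$), and then inverting via $u \mapsto u-1$. Your explicit binomial-inversion identity $\sum_j \mg_{n,m,j}^F u^j = \sum_t \sg_{n,m,t}^F (1+u)^t$ is just the spelled-out version of the paper's statement that $\sg^F_{n,m}(u+1)$ enumerates graphs in which each $F$-subgraph is either marked or unmarked, and your concern about shared edges is already settled by the definition of $G(P)$ as the union graph, whose edge count is what $\Patch_{F,n,m,t}$ records.
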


\begin{proof}
We introduce the generating function of $(n,m)$-graphs,
where the variable $u$ marks the total number of $F$-subgraphs
\[
    \sg^F_{n,m}(u) =
    \sum_{\text{$(n,m)$-graph $G$}}
    u^{G[F]} =
    \sum_{t \geq 0} \sg_{n,m,t}^F u^t,
\]
and apply the following inclusion-exclusion argument.
$\sg^F_{n,m}(u+1)$ is the generating function of $(n,m)$-graphs
where each $F$-subgraph is either marked, or left unmarked.
By definition, the set of marked $F$-subgraphs is a patchwork,
which is distinguished in the graph.
Thus, we can apply Theorem~\ref{th:distinguished},
where $F(z,w)$ is replaced by $\Patch_F(z,w,u)$
\[
    \sg^F_{n,m}(u+1) =
    n! [z^n w^m]
    \Patch_F\left(z,\frac{w}{1+w},u \right)
    e^z (1+w)^{\binom{n}{2}}.
\]
We then replace $u$ with $u-1$ and extract the coefficient $[u^t]$.
\end{proof}

For a general graph $F$, we do not have an explicit expression
for the generating function of $F$-patchworks.
However, partial information is enough to address some interesting problems.
The following theorem was first derived by \cite{Bo81}.

\begin{theorem} \label{th:subgraphs_limit}
Let $F$ denote a strictly balanced graph of density $d$,
with $\ell$ edges and $\mathfrak{a}$ automorphisms,
and assume $m \sim c n^{2-1/d}$
for some positive constant $c$.
Then the number of $F$-subgraphs in a random $(n,m)$-graph $G$
follows a Poisson limit law of parameter $\lambda = (2c)^{\ell}/\mathfrak{a}$,
\ie for any nonnegative integer $t$,
%%% Replacement of $a$ by $\mathfrak{a}$; check that this is also done in the appendix
\[
    \lim_{n \to +\infty} \proba(G[F] = t) = \frac{\lambda^t}{t!} e^{-\lambda}.
\]
\end{theorem}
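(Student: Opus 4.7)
The plan is to prove, for every fixed integer $t \geq 0$, the convergence $\esp\bigl((G[F])_t\bigr) \to \lambda^t$ of factorial moments; since the Poisson distribution is characterized by its moments, the method of moments then yields the stated convergence in distribution. Since $(G[F])_t$ counts ordered $t$-tuples of pairwise distinct $F$-subgraphs of $G$, dividing by $t!$ gives the number of size-$t$ $F$-patchworks in $G$, so the distinguished-object argument underlying Theorem~\ref{th:distinguished} (applied to the family of size-$t$ $F$-patchworks) produces
\[
    \esp\bigl((G[F])_t\bigr) = \frac{t!}{\binom{\binom{n}{2}}{m}} \sum_{P} \binom{\binom{n}{2}-e(P)}{m-e(P)},
\]
where the sum ranges over all size-$t$ $F$-patchworks $P$ on $[n]$ and $v(P), e(P)$ denote the vertex and edge counts of $G(P)$.

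I would split this sum according to the isomorphism type of $G(P)$. The dominant contribution should come from vertex-disjoint $t$-patchworks: with $v = |V(F)|$, there are $n!/\bigl((n-tv)!\, \mathfrak{a}^t \, t!\bigr)$ such patchworks on $[n]$, each with $v(P) = tv$ and $e(P) = t\ell$. Using $m \sim c n^{2-1/d}$ together with the identity $\ell/d = v$, this contribution evaluates to
\[
    \frac{n!}{(n-tv)!\,\mathfrak{a}^t} \cdot \frac{\binom{\binom{n}{2}-t\ell}{m-t\ell}}{\binom{\binom{n}{2}}{m}} \sim \frac{n^{tv}}{\mathfrak{a}^t} \left(\frac{2m}{n^2}\right)^{t\ell} \sim \frac{(2c)^{t\ell}}{\mathfrak{a}^t} = \lambda^t.
\]

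The main obstacle is showing that every non-disjoint isomorphism type contributes $\smallo(1)$. The same evaluation assigns to a patchwork $P$ with parameters $v(P), e(P)$ a term of order $n^{v(P) - e(P)/d}$, so it is enough to prove $v(P) - e(P)/d < 0$ whenever $P$ is not vertex-disjoint. I would build $P$ by adding its copies $F_1, \ldots, F_t$ one at a time: the $i$-th insertion shifts $v(P) - e(P)/d$ by $(\ell_i' - d v_i')/d$, where $v_i'$ and $\ell_i'$ count the vertices and edges of the intersection of $F_i$ with $\bigcup_{j<i} F_j$, an abstract subgraph of $F$. Strict balance of $F$ forces $\ell_i' < d v_i'$ whenever $v_i' \geq 1$ and the intersection is a proper subgraph of $F$. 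Ordering the copies so that $F_1$ and $F_2$ share a vertex — possible since $P$ is not vertex-disjoint — the insertion of $F_2$ yields a strict decrease (directly when $v_2' < v$, and via $\ell_2' < \ell$ when $v_2' = v$, a case forced by $F_1 \neq F_2$), while every later insertion gives a non-positive shift. Since only finitely many isomorphism types of $t$-patchworks exist for fixed $F$ and $t$, the minimum of $v(P) - e(P)/d$ over non-disjoint types is some $-\delta < 0$, so the total non-disjoint contribution is $O(n^{-\delta}) = \smallo(1)$ and $\esp\bigl((G[F])_t\bigr) \to \lambda^t$ as required.
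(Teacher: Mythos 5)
Your proof is correct, but it takes a genuinely different route from the paper: it is the classical Erd\H{o}s--R\'enyi/Bollob\'as method of factorial moments, whereas the paper stays inside its generating-function framework. The paper's proof invokes Theorem~\ref{th:subgraphs}, which encodes the exact distribution of $G[F]$ via the patchwork generating function $\Patch_F(z,w,u)$; the key step is to replace $\Patch_F(z,w,u)$ by $e^{uF(z,w)}$ (justified by the observation that two overlapping copies of a strictly balanced $F$ create a subgraph of essential density above $d$, which is a.a.s.\ absent by Corollary~\ref{th:distinguished_esp}), after which the Poisson probabilities fall out of the saddle-point asymptotics of Theorem~\ref{th:distinguished}. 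You instead compute $\esp\bigl((G[F])_t\bigr)$ by linearity of expectation over size-$t$ patchworks and appeal to moment-determinacy of the Poisson law. The combinatorial core is the same in both proofs --- vertex-disjoint patchworks dominate, and overlapping ones are negligible --- but your insertion argument (each overlapping copy shifts $v(P)-e(P)/d$ by $(\ell_i'-dv_i')/d\le 0$, strictly for the first overlap) makes fully explicit precisely the step that the paper only asserts (``one can prove that only patchworks of disjoint $F$-graphs have a nonnegligible contribution''). What the paper's route buys in exchange is that it produces the limit of $\proba(G[F]=t)$ directly, with no detour through moment convergence, and it is the form of argument that transfers verbatim to the degree-constrained models of Section~\ref{sec:degree_constraints}; your argument is more elementary and self-contained, avoiding the saddle-point machinery entirely.
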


\begin{proof}
As observed by \cite{ER60}, since $F$ is strictly balanced,
any graph $H$ containing two non-disjoint $F$-graphs
has a higher essential density than $F$.
According to Corollary~\ref{th:distinguished_esp},
the random graph $G$ of the theorem almost surely
contains no such $H$-subgraph.
Following this intuition,
one can prove that only patchworks of disjoint $F$-graphs
have a nonnegligible contribution in Equation~\eqref{eq:subgraphs}.
So we replace $\Patch(z,w,u)$ with $e^{u F(z,w)}$ and obtain
\[
    \sg^F_{n,m,t} \sim
    n! [z^n w^m u^t] e^{(u-1) F(z,w/(1+w))} e^z (1+w)^{\binom{n}{2}}
    \sim
    \binom{\binom{n}{2}}{m}
    [u^t] e^{(u-1) F \left(n, m/\binom{n}{2} \right)},
\]
where the asymptotics comes from Theorem~\ref{th:distinguished}.
Dividing by the total number of $(n,m)$-graphs
and observing $F(n, m/\binom{n}{2}) = \lambda$ finishes the proof.
\end{proof}

        \section{Small subgraphs in graphs with degree constraints} \label{sec:degree_constraints}

We consider now $(n,m,D)$-graphs,
which are $(n,m)$-graphs where all vertices
have their degree in the set $D$.
In the following, $D$ contains at least two integers.
We restrict our study to the case where
$m$ goes to infinity with $n$ in such a way that
$\frac{2m}{n}$ has a limit in $]\min(D), \max(D)[$.
Since the sum of the degrees is twice the number of edges,
if $\frac{2m}{n}$ reaches one of those bounds,
the corresponding $(n,m,D)$-graphs are regular
(a case already treated in the literature),
while if $\frac{2m}{n}$ is outside the interval,
there exist no $(n,m,D)$-graphs.
Finally, to shorten the theorems, we assume $\gcd(d - \min(D)\ |\ d \in D) = 1$.
The generating function of the set $D$ is $\Delta(x) = \sum_{d \in D} \frac{x^d}{d!}$,
and we define $\chi = \chi_{\frac{m}{n}}$ as
the unique positive solution (see Note~IV.46 of \cite{FS09}) of
\[
	\frac{\chi \Delta'(\chi)}{\Delta(\chi)} = \frac{2m}{n}.
\]
As observed by \cite{BC78, Wo78, Bollobas, EdPR16},
\emph{multigraphs} are easier to analyze than graphs
when considering degree constraints.
A multigraph $G$ is a pair $(V(G), E(G))$
where $V(G)$ denotes the set of labeled vertices,
and $E(G)$ the set of labeled oriented edges.
Each edge is thus an oriented pair of vertices,
and loops and multiple edges are allowed.
The definitions on graphs are extended naturally to multigraphs.
Given a multigraph family $\mF$,
let $\mF_{n,m,(d_0, d_1, \ldots)}$ denote
the number of $(n,m)$-multigraphs
with $d_j$ vertices of degree $j$, for all $j \geq 0$,
that are isomorphic to some multigraph from $\mF$.
We associate to the family $\mF$ the generating function
\[
	F(z,w,(\delta_0, \delta_1, \ldots)) =
    \sum_{n,m,d_0, d_1, \ldots}
    \mF_{n,m,(d_0, d_1, \ldots)}
    \bigg( \prod_{s \geq 0} \delta_s^{d_s} \bigg)
    \frac{w^m}{2^m m!}
    \frac{z^n}{n!}.
\]

\begin{theorem} \label{th:distinguished_degree_constraints}
With the previous notations and conventions,
given a multigraph family $\mF$,
%stable by multigraph autormorphisms
%(\ie if $\mF$ contains a multigraph $F$,
%it also contains all its isomorphic multigraphs),
the number of $(n,m,D)$-multigraphs
where one $\mF$-subgraph is distinguished is
\begin{equation} \label{eq:distinguished_degree_constraints}
	n! 2^m m! [z^n w^m]
    \sum_{j \geq 0}
    (2j)! [x^{2j}]
    F \left(z,w,(\Delta(x), \Delta'(x), \Delta''(x), \ldots) \right)
    e^{z \Delta(x)}
    \frac{w^j}{2^j j!}.
\end{equation}
If $F \left(\frac{n z}{\Delta(x \chi)}, \frac{w (x \chi)^2}{2m},(\Delta(x \chi), \Delta'(x \chi), \ldots) \right) /
F \left(\frac{n}{\Delta(\chi)}, \frac{\chi^2}{2m},(\Delta(\chi), \Delta'(\chi), \ldots) \right)$
converges uniformly on any compact set to an analytic function,
and $\mg_{n,m,D}$ denotes the total number of $(n,m,D)$-multigraphs,
%to a function $L(z,w,x)$, with $L(1,1,\chi) = 1$,
then the asymptotics of Equation~\eqref{eq:distinguished_degree_constraints} is
\[
    \mg_{n,m,D}
    F \left(\frac{n}{\Delta(\chi)}, \frac{\chi^2}{2m},(\Delta(\chi), \Delta'(\chi), \ldots) \right).
\]
\end{theorem}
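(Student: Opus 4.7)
My plan is to mirror the structure of Theorem~\ref{th:distinguished}: first derive the exact coefficient identity by the Symbolic Method adapted to the configuration model, then extract the announced asymptotic by a three-dimensional saddle-point analysis.

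For the identity~\eqref{eq:distinguished_degree_constraints}, I decompose an $(n,m,D)$-multigraph with one distinguished $\mF$-subgraph into three independent parts: a copy of some $F\in\mF$, the set of remaining (free) vertices, and the extra edges that complete the multigraph while keeping every vertex degree in $D$. Introducing a formal half-edge variable $x$, a free vertex contributes the exponential generating function $\Delta(x)=\sum_{d\in D}x^d/d!$, so the whole set of free vertices gives the factor $e^{z\Delta(x)}$. A vertex already carrying degree $s$ inside the distinguished subgraph must reach a final degree $d\in D$ with $d\geq s$, which requires $d-s$ additional half-edges whose exponential generating function is exactly $\Delta^{(s)}(x)$; this justifies the substitution $\delta_s\mapsto\Delta^{(s)}(x)$ in $F$. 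Finally, the $2j$ free half-edges must be paired into the $j$ extra edges: the operator $(2j)!\,[x^{2j}]\,(\cdot)\,w^j/(2^j j!)$ labels the half-edges, encodes the $(2j)!/(2^j j!)$ perfect matchings, and weights each new edge by $w$. The outer $n!\,2^m m!\,[z^n w^m]$ absorbs the normalizations $z^n/n!$ and $w^m/(2^m m!)$ present in the definition of $F$ and yields Equation~\eqref{eq:distinguished_degree_constraints}.

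For the asymptotics, I rewrite the coefficient extraction as a triple Cauchy integral in $(z,w,x)$; the $x$-contour realizes $[x^{2j}]$ after summation over $j$ so that the factor $w^j/(2^j j!)$ couples the $x$- and $w$-integrals. Up to polynomial factors, the dominant integrand is $F\cdot\exp(z\Delta(x))\cdot z^{-n-1}w^{-m-1}$. Freezing $F$ momentarily and computing the logarithmic derivatives of the exponent, the saddle-point system reads $z\Delta(x)=n$ and $xz\Delta'(x)=2m$ (the $w$-equation ties $w$ to the pairing weight). Eliminating $z$ gives $x\Delta'(x)/\Delta(x)=2m/n$, whose unique positive root is $\chi$; the saddle therefore sits at $(z_0,x_0)=(n/\Delta(\chi),\chi)$ with $w_0=\chi^2/(2m)$. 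The uniform-convergence hypothesis on the renormalized ratio of $F$ lets me replace $F$ by its saddle value $F(n/\Delta(\chi),\chi^2/(2m),(\Delta(\chi),\Delta'(\chi),\ldots))$ and pull it out of the integral. What is left is the very same integral with $\mF$ replaced by the trivial family containing only the empty multigraph, whose coefficient extraction equals $\mg_{n,m,D}$ by construction; multiplication yields the claimed product.

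The main obstacle is making this three-dimensional saddle-point analysis rigorous. Three points deserve care: (i) existence, uniqueness and location of the saddle --- existence of $\chi$ is guaranteed by the assumption $2m/n\in\,]\min(D),\max(D)[$, while the hypothesis $\gcd(d-\min(D)\mid d\in D)=1$ rules out periodicity in the $x$-contour; (ii) deforming the three contours to pass through the saddle along steepest-descent directions and controlling the tail contributions; and (iii) justifying the separation between $F$ and the dominant exponential, since $w$ appears both in $F$ and in the pairing factor, so the two roles must be disentangled. The uniform-convergence assumption is tailored exactly to (iii): it ensures that the variation of $F$ on the scale of the saddle neighbourhood is negligible, so the standard multivariate saddle-point machinery applies and produces the claimed product form.
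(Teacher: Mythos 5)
Your derivation of the exact formula~\eqref{eq:distinguished_degree_constraints} is sound and is essentially the paper's own argument: the half-edge decomposition, the substitution $\delta_s\mapsto\Delta^{(s)}(x)$ for vertices already carrying degree $s$ in the distinguished copy, and the factor $e^{z\Delta(x)}$ for free vertices all match. The gap is in the asymptotic part. The triple Cauchy integral you propose to analyze does not exist: to obtain an integrand of the form $F\cdot e^{z\Delta(x)}\,z^{-n-1}w^{-m-1}(\cdots)$ you must interchange the sum over $j$ with the $x$-contour integral, and this interchange produces the pairing kernel
\[
\sum_{j\geq 0}\frac{(2j)!}{2^j j!}\left(\frac{w}{x^2}\right)^j,
\]
a power series with \emph{zero} radius of convergence, since $(2j)!/(2^j j!)=(2j-1)!!$ grows super-exponentially. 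Consequently there is no integrand whose critical points you can legitimately compute; the saddle equations $z\Delta(x)=n$ and $xz\Delta'(x)=2m$ that you write down do give the correct location $\left(n/\Delta(\chi),\,\chi^2/(2m),\,\chi\right)$, but they are not derived from any valid representation. Nor can this be dismissed as a finite-sum technicality: for the families the theorem is actually applied to later (patchworks, where $F$ is replaced by $e^{(u-1)F}$), the relevant edge counts, hence the range of $j$, are unbounded. Your point (iii) correctly senses that the coupling between $w$ and the pairing factor is the difficulty, but the uniform-convergence hypothesis cannot repair it --- that hypothesis only serves to freeze $F$ at the saddle once a convergent representation exists.

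The missing idea, which is the crux of the paper's proof, is to linearize the pairing factor \emph{before} any contour manipulation, using the Gaussian-moment identity
\[
\frac{(2j)!}{2^j j!}=\frac{1}{\sqrt{2\pi}}\int_{-\infty}^{+\infty}t^{2j}e^{-t^2/2}\,dt .
\]
Substituting this and exchanging the (now absolutely convergent, since all functions are entire) sum over $j$ with the $t$-integral turns the whole sum into
\[
\frac{1}{\sqrt{2\pi}}\int_{-\infty}^{+\infty}
F\left(z,w,\left(\Delta(\sqrt{w}\,t),\Delta'(\sqrt{w}\,t),\ldots\right)\right)
e^{z\Delta(\sqrt{w}\,t)}\,e^{-t^2/2}\,dt,
\]
a legitimate object on which $[z^n w^m]$ can be extracted. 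The paper then applies the changes of variables $z\to nz/\Delta(\sqrt{w}t)$, $w=(x/t)^2$, $t\to\sqrt{2m}\,t$, after which the integrand factors as
\[
F\left(\frac{nz}{\Delta(x)},\frac{x^2}{2mt^2},\left(\Delta(x),\ldots\right)\right)
e^{nz}\,\Delta(x)^n\,t^{2m}e^{-mt^2},
\]
so that a saddle point in $(z,x)$ at $(1,\chi)$ and a Laplace estimate in $t$ at $t=1$ pull $F$ out at exactly the announced value, leaving $\mg_{n,m,D}=(2m)![x^{2m}]\Delta(x)^n$. In short: your combinatorics and your final answer are right, but the analytic route as stated collapses at its first step, and the Gaussian linearization (or an equivalent Borel-type device) is indispensable.
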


\begin{proof}
This result is obtained as a combination
of the proof of Theorem~\ref{th:distinguished}
and the work of \cite{EdPR16}.
In particular, those authors derived an expression for the number of $(n,m,D)$-multigraphs
\[
    \mg_{n,m,D} = (2m)! [x^{2m}] \Delta(x)^n.
\]
For the asymptotics, the sum is rewritten as an integral
\[
    \frac{n! 2^m m!}{\sqrt{2\pi}}
    [z^n w^m]
    \int_{-\infty}^{+\infty}
    F(z,\sqrt{w} t,(\Delta(\sqrt{w} t), \Delta'(\sqrt{w} t), \ldots))
    e^{z \Delta(\sqrt{w} t)}
    e^{-t^2/2}
    dt,
\]
and a saddle-point method is applied.
\end{proof}

A direct consequence of the previous theorem is
the counterpart of Corollary~\ref{th:distinguished_esp}.

\begin{corollary} \label{th:distinguished_esp_degree_constraints}
Denote by $\ell^{\star}$ and $d^{\star}$ the number of edges and density
of a densest subgraph of the multigraph $F$,
and consider a random $(n,m,D)$-multigraph $G$
then
\[
    \esp(G[F]) = \bigO(n^{\ell^{\star}(1/d^\star - 1)}).
\]
\end{corollary}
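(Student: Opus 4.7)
The plan is to imitate the derivation of Corollary~\ref{th:distinguished_esp}, replacing Theorem~\ref{th:distinguished} by its degree-constrained counterpart Theorem~\ref{th:distinguished_degree_constraints}. Let $H$ be a densest sub-multigraph of $F$, with $v^\star = \ell^\star/d^\star$ vertices and $\ell^\star$ edges, and apply Theorem~\ref{th:distinguished_degree_constraints} with $\mF$ taken to be the family of all $H$-multigraphs. The generating function of this family is the single monomial
\[
    F_\mF(z, w, (\delta_0, \delta_1, \ldots))
    = \frac{z^{v^\star}\, w^{\ell^\star}}{2^{\ell^\star}\, \ell^\star!\, \mathfrak{a}(H)}
    \prod_{s \geq 0} \delta_s^{d_s(H)},
\]
where $d_s(H)$ denotes the number of degree-$s$ vertices of $H$ and $\mathfrak{a}(H)$ its automorphism count.

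Next, I would check the uniform-convergence hypothesis of Theorem~\ref{th:distinguished_degree_constraints}. Because $F_\mF$ is a monomial, the required ratio reduces to a product of elementary monomials in $x$, $z$, $w$ and of analytic factors $\Delta^{(s)}(x\chi)/\Delta^{(s)}(\chi)$; uniform convergence on compacts then follows as soon as $\chi$ lies strictly inside the domain of analyticity of $\Delta$. Since $2m/n$ converges to an interior point of $(\min D, \max D)$, the saddle point $\chi_{m/n}$ stays in a compact subset of $(0, \infty)$ and each $\Delta^{(s)}(\chi)$ is a bounded positive constant; in addition $m = \Theta(n)$.

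Substituting the saddle-point values of Theorem~\ref{th:distinguished_degree_constraints} into $F_\mF$ then yields
\[
    F_\mF\!\left(\tfrac{n}{\Delta(\chi)},\, \tfrac{\chi^2}{2m},\, (\Delta(\chi), \Delta'(\chi), \ldots)\right)
    = \bigO\!\left(\frac{n^{v^\star}}{m^{\ell^\star}}\right)
    = \bigO\!\left(n^{v^\star - \ell^\star}\right)
    = \bigO\!\left(n^{\ell^\star(1/d^\star - 1)}\right),
\]
and dividing the conclusion of Theorem~\ref{th:distinguished_degree_constraints} by $\mg_{n,m,D}$ turns its left-hand side into $\esp(G[H])$, yielding the announced bound. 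The extension from $\esp(G[H])$ to $\esp(G[F])$ is done exactly as in the graph case: every $F$-subgraph of $G$ forces the presence of at least one $H$-subgraph, so the same $\bigO$-estimate controls $\esp(G[F])$ (and a fortiori $\proba(G[F] \geq 1)$).

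The main obstacle I anticipate is purely of a bookkeeping nature, namely verifying in full detail that the saddle-point analysis underlying Theorem~\ref{th:distinguished_degree_constraints} is applicable with the monomial $F_\mF$ above and delivers the clean product form $\mg_{n,m,D}\, F_\mF(\cdot)$ with polynomial error; this boils down to confirming that $\chi_{m/n}$ remains bounded away from the singularities of $\Delta$, a consequence of the standing hypothesis that $2m/n$ avoids both endpoints of $(\min D, \max D)$. Beyond that, the proof is a transcription of the derivation of Corollary~\ref{th:distinguished_esp}.
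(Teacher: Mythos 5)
Your proposal is correct and follows essentially the same route as the paper, which treats this corollary exactly as you do: apply Theorem~\ref{th:distinguished_degree_constraints} to the family of copies of a densest sub-multigraph $H$ (whose generating function is a single monomial, making the uniform-convergence hypothesis trivial), evaluate at the saddle point using $\chi = \exactbigO(1)$ and $m = \exactbigO(n)$, divide by $\mg_{n,m,D}$, and transfer from $H$ to $F$ by the same monotonicity argument used for Corollary~\ref{th:distinguished_esp}. One harmless slip: under the paper's normalization $\frac{w^m}{2^m m!}\frac{z^n}{n!}$, the $2^{\ell^\star}\ell^\star!$ ways of labeling and orienting the edges cancel that prefactor, so the monomial's coefficient should be $1/\mathfrak{a}(H)$ rather than $1/(2^{\ell^\star}\ell^\star!\,\mathfrak{a}(H))$ --- a constant factor that does not affect the $\bigO$ bound.
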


As stated at the beginning of this section,
we consider random $(n,m,D)$-multigraphs
with a number $m$ of edges that grows linearly
with the number $n$ of vertices.
In that case, $\chi$ as a finite positive limit.
Thus, the condition of the following theorem
is satisfied only when $F$ is a cycle.
However, in a future extension of this work,
we plan to consider the case where $\frac{2m}{n}$ goes to infinity
(when $D$ is infinite).
In this more general setting,
other subgraphs than cycles will appear,
but the condition should remain as stated here.

\begin{theorem} \label{th:multigraphs_limit_law_degree_constraints}
Let $F$ denote a strictly balanced
$(k,\ell)$-multigraph with $\mathfrak{a}$ automorphisms.
Assuming that $m$ goes to infinity with $n$ in such a way that
\[
	\frac{1}{\mathfrak{a}}
    \frac{n^k}{(2m)^{\ell}}
    \frac{\chi^{2\ell}}{\Delta(\chi)^k}
    \prod_{v \in V(F)}
    \left(\frac{d}{d \chi}\right)^{\deg(v)} \Delta(\chi)
\]
has a positive limit, denoted by $\lambda$,
then the number of $F$-subgraphs in a random $(n,m,D)$-multigraph
follows a Poisson limit law of parameter $\lambda$.
\end{theorem}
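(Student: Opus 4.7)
The plan is to transport the three-step argument used for Theorem~\ref{th:subgraphs_limit} into the multigraph-with-degree-constraints setting, replacing Theorem~\ref{th:distinguished} by Theorem~\ref{th:distinguished_degree_constraints} and Corollary~\ref{th:distinguished_esp} by Corollary~\ref{th:distinguished_esp_degree_constraints}. First, I would establish the multigraph analogue of Theorem~\ref{th:subgraphs}: starting from
\[
    \sg^F_{n,m,D}(u) = \sum_{G} u^{G[F]}
\]
summed over $(n,m,D)$-multigraphs, the same inclusion-exclusion as in the proof of Theorem~\ref{th:subgraphs} expresses $\sg^F_{n,m,D}(u)$ as $[u^t]$ of Theorem~\ref{th:distinguished_degree_constraints} applied to the $F$-patchwork family, with $u$ replaced by $u-1$ and $F$ replaced by $\Patch_F$ (suitably lifted to the $(\delta_0,\delta_1,\ldots)$-variable framework required by the degree constraints).

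Next, I would argue that only patchworks made of pairwise-disjoint copies of $F$ contribute in the limit. Since $F$ is strictly balanced, the underlying multigraph $G(P)$ of any patchwork $P$ in which two $F$-graphs share at least one vertex is a strictly denser multigraph than $F$. For each of the finitely many possible overlap types, Corollary~\ref{th:distinguished_esp_degree_constraints} shows that the expected number of such configurations in a random $(n,m,D)$-multigraph is $\smallo(1)$ under the scaling assumption, because the exponent $\ell^{\star}(1/d^{\star}-1)$ becomes strictly negative compared with the exponent obtained for $F$ itself. Consequently, $\Patch_F(z,w,u-1)$ can be replaced inside the extraction formula by $e^{(u-1)F(z,w,(\delta_0,\delta_1,\ldots))}$ without changing the leading asymptotics.

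After this replacement, Theorem~\ref{th:distinguished_degree_constraints} immediately yields
\[
    \sg^F_{n,m,D,t}/\mg_{n,m,D} \sim [u^t]\, e^{(u-1)\lambda_n},
\]
where
\[
    \lambda_n = F\!\left(\tfrac{n}{\Delta(\chi)},\,\tfrac{\chi^2}{2m},\,(\Delta(\chi),\Delta'(\chi),\ldots)\right).
\]
Since $F$ is a single $(k,\ell)$-multigraph with $\mathfrak{a}$ automorphisms, its exponential generating function contributes a factor $1/\mathfrak{a}$, the $k$ vertices each contribute a factor $n/\Delta(\chi)$ together with the derivative $\Delta^{(\deg(v))}(\chi)$ encoding the number of half-edges used, and the $\ell$ edges contribute $\chi^{2\ell}/(2m)^{\ell}$ from the $w$-variable. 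Collecting these factors shows that $\lambda_n$ is exactly the quantity displayed in the theorem, so $\lambda_n \to \lambda$ by hypothesis and the Poisson limit follows from the identity $[u^t]e^{(u-1)\lambda}=e^{-\lambda}\lambda^t/t!$.

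The main obstacle will be the second step: giving a uniform $\smallo(1)$ control on non-disjoint patchworks. In the simple-graph case of Theorem~\ref{th:subgraphs_limit} this reduces to a clean essential-density comparison, but in the multigraph setting the degree constraints interact nontrivially with the gluing operation (identifying vertices changes the degree sequence and hence the applicable region of $\chi$). I would need to verify that the hypothesis of Corollary~\ref{th:distinguished_esp_degree_constraints} is still met for each overlap type and that the resulting bound survives the summation over the finitely many ways to overlap. Once that uniform bound is established, the remaining work is a direct substitution and routine bookkeeping of combinatorial factors.
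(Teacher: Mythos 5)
Your proposal follows essentially the same route as the paper's proof: inclusion-exclusion via $F$-patchworks lifted to the degree-constrained setting (the multigraph analogue of Theorem~\ref{th:subgraphs}), replacement of $\Patch_F$ by $e^{(u-1)F}$ justified by strict balancedness together with Corollary~\ref{th:distinguished_esp_degree_constraints}, and extraction of the Poisson limit via the asymptotic part of Theorem~\ref{th:distinguished_degree_constraints}, with the bookkeeping identifying $\lambda_n$ with the stated $\lambda$. The uniform control over non-disjoint overlap types that you flag as the main obstacle is indeed glossed over in the paper as well (it is dispatched with ``for the same reason as in Theorem~\ref{th:subgraphs_limit}''), so your account is, if anything, more candid about where the real work lies.
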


\begin{proof}
The generating function of $F$-multigraphs is
\[
	F(z,w,(\delta_0, \delta_1, \ldots)) =
    \frac{1}{\mathfrak{a}} \bigg(\prod_{v \in V(F)} \delta_{\deg(v)} \bigg) w^{\ell} z^{k}.
\]
As in the proof of Theorem~\ref{th:subgraphs},
we replace, in Equation~\eqref{eq:distinguished_degree_constraints},
the generating function of the multigraph family
with the generating function of $F$-patchworks.
For the same reason as in Theorem~\ref{th:subgraphs_limit},
this generating function is then approximated by
$e^{(u-1) F(z,w,(\delta_0, \delta_1, \ldots))}$.
Thus, the asymptotic number of $(n,m,D)$-multigraphs with exactly $t$ $F$-subgraphs is
\[
    n! 2^m m! [z^n w^m u^t]
    \sum_{j \geq 0}
    (2j)! [x^{2j}]
    e^{(u-1) F(z,w,(\Delta(x), \Delta'(x), \Delta''(x), \ldots))}
    e^{z \Delta(x)}
    \frac{w^j}{2^j j!}.
\]
Its limit is extracted using the second part
of Theorem~\ref{th:distinguished_degree_constraints}.
\end{proof}

There are $2^m m!$ ways to orient and label the edges
of a graph with $m$ edges.
Thus, each graph matches $2^m m!$ multigraphs.
Conversely, consider a multigraph family $\mF$,
stable by multigraph automorphisms,
where each multigraph has $m$ edges,
and contains neither loops nor multiple edges.
Then $\mF$ can be partitioned into sets of sizes $2^m m!$,
each corresponding to a graph.
Thus, as proven by \cite{EdPR16},
counting graphs with degree constraints
can be achieved by removing loops and double edges
from multigraphs with degree constraints.
The following theorem describes the small subgraphs
of $(n,m,D)$-graphs, when $m = \bigO(n)$.
Its has been derived in the particular case
of regular graphs by \cite{Bo80} and \cite{Wo81},
and of graphs with degrees $1$ or $2$ by \cite{BP14}.
%
%Observe that the condition $m = \bigO(n)$
%is natural in the particular case of a bounded set $D$ of allowed degrees.

\begin{theorem} \label{th:graphs_degree_constraints}
Consider a random $(n,m,D)$-graph $G$
that satisfies the conditions stated at the beginning of the section.

\noindent
$\bullet$
Any connected graph
that is neither a tree nor a unicycle
is asymptotically almost surely not a subgraph of $G$.

\noindent $\bullet$
Denoting by $C_j$ a cycle of length $j$, and with $k$ a fixed integer $\geq 3$,
then $G[C_3], \ldots, G[C_k]$
are asymptotically independent Poisson random variables of means
\[
    \frac{1}{2j}
    \left( \frac{1}{2m/n}
    \frac{\chi^2 \Delta''(\chi)}{\Delta(\chi)}
    \right)^j
    \quad
    \text{ for each $3 \leq j \leq k$.}
\]
\end{theorem}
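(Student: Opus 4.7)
The plan is to treat the two bullets separately, reducing each to the multigraph setting and then transferring back to graphs via the loops-and-double-edges correspondence recalled just before the statement.

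For the \textbf{first bullet}, the key observation is that a connected graph which is neither a tree nor a unicycle has at least one excess edge, so both its density and its essential density $d^\star$ are strictly greater than $1$. Applying Corollary~\ref{th:distinguished_esp_degree_constraints} to $F$ viewed as a simple multigraph then yields $\esp(G[F]) = \bigO(n^{\ell^\star(1/d^\star-1)}) = \smallo(1)$ in the multigraph model, the exponent being negative. Because a uniform random $(n,m,D)$-multigraph is simple with probability bounded below by a positive constant (loops and multiple edges being Poisson with bounded parameters when $m=\bigO(n)$), the same $\smallo(1)$ estimate transfers to the graph model, and Markov's inequality, together with a union bound over the finitely many connected graphs of any fixed size, finishes the argument.

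For the \textbf{second bullet}, I would first establish the marginal Poisson limit of each $G[C_j]$ by direct substitution into Theorem~\ref{th:multigraphs_limit_law_degree_constraints} with $F=C_j$: choosing $k=j$, $\ell=j$, $\mathfrak{a}=2j$ and every vertex of degree $2$ makes the product over $v\in V(F)$ collapse to $\Delta''(\chi)^j$, and the announced mean $\lambda_j$ pops out after simplification. To upgrade this to joint convergence with independence, I would extend the patchwork argument of Theorem~\ref{th:subgraphs_limit} by introducing one marking variable $u_j$ per cycle length and working with the coloured family $\{C_3,\ldots,C_k\}$. Any patchwork in which two cycles share a vertex or an edge induces a connected subgraph of density $>1$ and therefore contributes $\smallo(1)$ by the first bullet, so only patchworks of pairwise disjoint cycles survive; their generating function factorises as $\exp\!\bigl(\sum_{j=3}^k (u_j-1) F_{C_j}\bigr)$. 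Extracting coefficients via Theorem~\ref{th:distinguished_degree_constraints} then yields factorial moments matching those of a vector of independent Poissons with the prescribed means.

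To complete the argument, I would apply the same joint patchwork analysis to cycles of length $1$ and $2$ as well, showing that loops and double edges are also asymptotically Poisson in the multigraph model and jointly independent of $(G[C_3],\ldots,G[C_k])$. Conditioning on their counts being zero, which is exactly the event that the multigraph is a simple graph, preserves the marginal law of the remaining cycle counts by independence. The main obstacle I anticipate is this joint patchwork enumeration: one must verify, uniformly over all possible collision patterns between cycles of possibly different lengths in $\{1,\ldots,k\}$, that every non-disjoint configuration produces a subgraph of essential density strictly greater than $1$, so that Corollary~\ref{th:distinguished_esp_degree_constraints} indeed kills its contribution.
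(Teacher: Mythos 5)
Your proposal is correct and follows essentially the same route as the paper: the first bullet via Corollary~\ref{th:distinguished_esp_degree_constraints} (essential density $>1$) transferred to graphs through the positive limiting probability of simplicity, and the second bullet via a multivariate patchwork of cycles of lengths $1$ through $k$ (loops and double edges included), approximated by $\exp\bigl(\sum_j u_j C_j\bigr)$ thanks to asymptotic disjointness, with the graph statement recovered by setting the loop and double-edge counts to zero and using the $2^m m!$ correspondence. The only cosmetic difference is that you first derive the marginals from Theorem~\ref{th:multigraphs_limit_law_degree_constraints}, a step the paper's joint analysis subsumes.
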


\bibliographystyle{abbrv}
\bibliography{biblio}

\newpage

  \appendix
  
  \begin{center}
  {\large \bf APPENDIX }
  \end{center}

\medskip
We present here the long version of the proofs
sketched in the previous sections, after an introduction to the technique
of translating combinatorial operations
into generating functions equations.

	\section{Analytic combinatorics}

    \paragraph{Symbolic method.}

The book of \cite{FS09}, available online,
provides an excellent introduction
to the techniques of analytic combinatorics.
The main idea is to associate to any combinatorial family $\mA$
of labeled objects a generating function
\[
    A(z) = \sum_{n \geq 0} a_n \frac{z^n}{n!},
\]
where $a_n$ denotes the number of objects of size $n$ in $\mA$.
In the present article,
to express the generating function
of interesting combinatorial families,
we apply the following dictionary to translate
combinatorial relations between the families
into analytic equations on their generating functions
\begin{itemize}
\item
\textbf{Disjoint union.}
If $\mA \cap \mB = \emptyset$, and $\mC = \mA \cup \mB$,
then $C(z) = A(z) + B(z)$.
\item
\textbf{Relabeled Cartesian product.}
In the relabeled Cartesian product $\mC = \mA \times \mB$,
we consider all relabellings of the pairs $(a,b)$,
with $a \in \mA$ and $b \in \mB$,
so that each label,
from $1$ to the sum of the sizes of $a$ and $b$,
appears exactly once.
We then have
\[
    C(z) = A(z) B(z).
\]
\item
\textbf{Set.}
A set of objects from $\mA$ has generating function $e^{A(z)}$.
\end{itemize}

	\paragraph{Laplace and saddle-point methods.}

We use in our proofs a simple case of the Laplace method
(see e.g.\ the book \emph{Analytic Combinatorics in Several Variables} of Pemantle and Wilson, 2013).

\begin{lemma}
Consider two entire functions $A(t)$ and $B(t)$,
where $B(t)$ is a positive function that reaches
its unique maximum at a point $r$, and $A(r) \neq 0$.
Then on any open interval $I$ (finite or infinite) that contains $r$, we have
\[
	\int_I A(t) B(t)^n dt
    \sim 
    A(r) \int_I B(t)^n dt
\]
whenever the integral is well defined.
\end{lemma}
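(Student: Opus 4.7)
The plan is to carry out the standard Laplace-method localization. Starting from the identity
\[
    \int_I A(t) B(t)^n dt - A(r) \int_I B(t)^n dt = \int_I (A(t) - A(r)) B(t)^n dt,
\]
it suffices to show the right-hand side is $\smallo\!\bigl(\int_I B(t)^n dt\bigr)$ as $n \to \infty$. The argument splits into a small window around $r$, where $A$ is close to $A(r)$ by continuity, and its complement, where $B^n$ is exponentially dominated.

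Fix $\varepsilon > 0$. Since $A$ is entire and hence continuous at $r$, one can choose $\delta > 0$ so that $|A(t) - A(r)| \le \varepsilon |A(r)|$ for $|t-r| \le \delta$. The contribution of $[r-\delta, r+\delta]$ to the error integral is then at most $\varepsilon |A(r)| \int_I B(t)^n dt$, which is of the desired smaller order once $\varepsilon$ is sent to $0$. It then remains to bound the tail $\int_{I \setminus [r-\delta, r+\delta]} |A(t) - A(r)| B(t)^n dt$.

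For the denominator, since $B$ is entire, positive, and attains a unique maximum at $r$, a local quadratic expansion together with the change of variables $t = r + s/\sqrt{n}$ gives $\int_I B(t)^n dt \ge \kappa B(r)^n / \sqrt{n}$ for some $\kappa > 0$ (if $B''(r)=0$, the next non-vanishing even derivative plays the same role, with a milder polynomial factor). For the tail, uniqueness of the maximum combined with continuity implies $\sup B < B(r)$ on any compact subset of $I \setminus [r-\delta, r+\delta]$; on unbounded portions of $I$, the hypothesis that the integral is well defined forces $B(t) \to 0$ at infinity, so that $B \le B(r)/2$ outside a sufficiently large compact set. Combining, $B \le \rho B(r)$ on the tail for some $\rho < 1$. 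Factoring $B^n = B^{n_0} B^{n - n_0}$, with $n_0$ fixed so that $\int_I |A| B^{n_0} dt < \infty$, yields a tail bound of order $\rho^{n-n_0} B(r)^{n-n_0}$, which is exponentially smaller than $\kappa B(r)^n / \sqrt{n}$ and hence negligible.

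The main obstacle is the possibly unbounded interval $I$: one must convert the integrability hypothesis into a uniform gap $\sup_{I \setminus (r-\delta, r+\delta)} B < B(r)$, since the unique-maximum assumption alone does not prevent $B$ from approaching $B(r)$ at infinity. Once this gap is secured, the localization is routine, and letting $\varepsilon \to 0$ completes the argument.
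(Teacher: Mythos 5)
The paper itself offers no proof of this lemma: it is stated as a known ``simple case of the Laplace method'' with a pointer to the literature, so your localization argument is exactly the intended route, and most of it is sound --- the $\varepsilon$-window around $r$ using continuity of $A$ and $A(r)\neq 0$, the lower bound $\int_I B(t)^n dt \ge \kappa B(r)^n/\sqrt{n}$ (with the correct caveat about the first non-vanishing even derivative; note $B$ cannot be constant, being entire with a unique maximum), and the factoring $B^n = B^{n_0} B^{n-n_0}$ with $n_0$ fixed so that $\int_I |A| B^{n_0}\,dt < \infty$ are all standard and correct.

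The genuine gap is precisely the step you yourself singled out as the main obstacle and then resolved with a false implication: well-definedness of the integrals does \emph{not} force $B(t) \to 0$ at infinity. Integrability of $B^{n_0}$ only bounds the measure of the superlevel sets $\{t : B(t) \ge c\}$; it is entirely compatible with $B$ having spikes of height tending to $B(r)$ on intervals of rapidly shrinking width marching off to infinity (and there are classical constructions of entire functions integrable on $\reals$ that do not tend to $0$ at infinity). In that situation $\sup_{I \setminus [r-\delta, r+\delta]} B = B(r)$, no $\rho < 1$ exists, and your tail bound collapses. Dominated convergence still gives the tail $= \smallo(B(r)^n)$, but that does not beat the factor $1/\sqrt{n}$ in the main term; and with $|A|$ chosen large at the spike locations the asserted equivalence can genuinely fail, so no repair of this step within the stated hypotheses is possible. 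The honest fix is to add the standard Laplace hypothesis $\sup_{|t-r| \ge \delta} B(t) < B(r)$ for every $\delta > 0$ --- a strengthening of ``unique maximum'' that is automatic when $I$ is bounded and is satisfied in every application in this paper, where $B$ is unimodal --- after which your argument goes through verbatim. The looseness is arguably in the lemma's statement as much as in your proof, but the deduction ``integrability forces $B \to 0$'' should not be claimed.
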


The saddle-point method is a technique to compute
the asymptotics of the coefficients of a generating function.
The coefficient extraction is written as a Cauchy integral,
on which a Laplace method is applied.
There exist many variations of this technique.
We will use here the following lemma,
which is a particular case of Theorem~VIII.8 from \cite{FS09}.

\begin{lemma} \label{th:large_powers}
Consider two entire functions $A(z)$ and $B(z)$,
and a sequence of integers $N$ such that $N/n$
has a positive finite limit $\lambda$.
Assume there exists a positive solution $r$ to the equation
\[
	\frac{r B'(r)}{B(r)} = \lambda,
\]
such that $A(r) \neq 0$ and $r \left( \frac{r B'(r)}{B(r)} \right)' \neq 0$. Then
\[
	[z^N] A(z) B(z)^n \sim A(r) [z^N] B(z)^n.
\]
\end{lemma}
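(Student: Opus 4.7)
The plan is to realize the coefficient as a Cauchy integral on the circle $|z| = r$ and apply the Laplace/saddle-point method, the radius $r$ being chosen so that the saddle-point equation $r B'(r)/B(r) = \lambda$ (with $\lambda = \lim N/n$) is satisfied. Parametrizing $z = r e^{i\theta}$,
\[
    [z^N] A(z) B(z)^n
    = \frac{1}{2\pi r^N} \int_{-\pi}^{\pi} A(r e^{i\theta}) \, B(r e^{i\theta})^n \, e^{-i N \theta} \, d\theta,
\]
so the whole analysis reduces to understanding the integral on the right-hand side for large $n$.

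The next step is to expand the phase $\Phi(\theta) := n \log B(r e^{i\theta}) - i N \theta$ around $\theta = 0$. The first-order term vanishes because $r B'(r)/B(r) = \lambda$ and $N/n \to \lambda$, while the second-order term is proportional to $- n \cdot r \bigl( r B'(r)/B(r) \bigr)'$, which is nonzero by hypothesis. Hence, near $\theta = 0$, the integrand behaves like $A(r) \, B(r)^n$ times a Gaussian of width of order $1/\sqrt{n}$. I would split the range into a central window $|\theta| \leq n^{-1/2 + \varepsilon}$ and a tail $|\theta| > n^{-1/2 + \varepsilon}$. On the central window, $A(r e^{i\theta}) = A(r) + o(1)$ uniformly since $A$ is entire and $A(r) \neq 0$, so $A$ can be pulled out of the integral, and the Gaussian integration produces a standard estimate of the form
\[
    [z^N] A(z) B(z)^n \sim A(r) \, \frac{B(r)^n}{r^N} \cdot \frac{1}{\sqrt{2\pi n \, \sigma^2}},
    \qquad
    \sigma^2 = r \Bigl( \tfrac{r B'(r)}{B(r)} \Bigr)'.
\]
Applying exactly the same analysis with $A \equiv 1$ yields the corresponding estimate for $[z^N] B(z)^n$, and dividing the two statements eliminates the Gaussian factor and leaves $A(r)$, which is the claimed equivalence.

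The main obstacle is the tail bound: one must show that $\int_{|\theta| > n^{-1/2 + \varepsilon}} |A(r e^{i\theta})| \, |B(r e^{i\theta})|^n \, d\theta$ is negligible compared with the central Gaussian contribution $B(r)^n / \sqrt{n}$. This requires a uniform inequality of the form $|B(r e^{i\theta})| \leq B(r) \, e^{-\delta}$ for some $\delta > 0$ whenever $|\theta| > n^{-1/2 + \varepsilon}$, which in turn presupposes that $|B(r e^{i\theta})|$ attains its unique maximum at $\theta = 0$ on the circle. In the combinatorial setting of the paper, $B$ has nonnegative Taylor coefficients and its active support has $\gcd = 1$, which guarantees this strict unimodality and makes the tail exponentially small; this is precisely the admissibility hypothesis underlying Theorem~VIII.8 in \cite{FS09}. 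Once the tail is discarded, the Laplace approximation on the central window is routine, and the conclusion follows.
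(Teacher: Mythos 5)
Your overall route coincides with the paper's intention: the paper in fact gives no proof of this lemma, invoking it as ``a particular case of Theorem~VIII.8 from \cite{FS09}'', and the proof of that theorem is exactly the Cauchy-integral-plus-Laplace argument you sketch. You also put your finger on a real imprecision in the statement: with $A$, $B$ merely entire and $r$ a positive saddle, the tail bound is unavailable, since nothing forces $|B(re^{i\theta})| < B(r)$ for $0 < |\theta| \leq \pi$. In the paper's applications this strict maximality comes from nonnegative Taylor coefficients together with aperiodicity (compare the standing assumption $\gcd(d - \min(D) \mid d \in D) = 1$), which is precisely the admissibility hypothesis of Theorem~VIII.8 that the lemma leaves implicit; flagging this is a genuine contribution of your write-up rather than a defect.

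There is, however, one step in your execution that fails as written: you fix the saddle radius $r$ by $rB'(r)/B(r) = \lambda$, whereas the hypothesis only gives $N = \lambda n + \smallo(n)$. The phase $\Phi(\theta) = n \log B(re^{i\theta}) - iN\theta$ then retains a linear term $i(n\lambda - N)\theta$, which is \emph{not} negligible on the window $|\theta| \leq n^{-1/2+\varepsilon}$ unless $N - \lambda n = \smallo(\sqrt{n})$. Carrying it through, the central Gaussian integral acquires an extra factor $\exp\bigl(-(N - n\lambda)^2/(2n\sigma^2)\bigr)$, so your displayed asymptotic for $[z^N] A(z) B(z)^n$ is false in general (take $N = \lambda n + \lfloor n^{2/3} \rfloor$); worse, when $(N - n\lambda)^2/n$ grows faster than $n^{2\varepsilon}$, the supposedly negligible tails can dominate the central contribution, so even the ratio argument, as organized, does not go through. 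The standard repair --- and what Theorem~VIII.8 of \cite{FS09} actually does --- is to use a moving saddle $r_n$ defined by $r_n B'(r_n)/B(r_n) = N/n$, which exists near $r$ for large $n$ by the implicit function theorem thanks to your nonvanishing hypothesis $r \bigl( r B'(r)/B(r) \bigr)' \neq 0$. This kills the linear phase exactly; both $[z^N] A(z)B(z)^n$ and $[z^N] B(z)^n$ are then estimated at the same $r_n$, the Gaussian factors cancel in the ratio, and the ratio tends to $A(r_n) \to A(r)$ by continuity. With that single modification, and the positivity/aperiodicity hypothesis made explicit, your proof is complete.
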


    \section{Proof of Theorem~\ref{th:distinguished}}

    \paragraph{Exact expression.}

Let us first prove the theorem for a family $\mH$
that is composed of the graphs isomorphic to some $(k,\ell)$-graph $H$,
that has a number $\mathfrak{a}$ of automorphisms.
Then the number of $H$-graphs is $k!/\mathfrak{a}$,
and the generating function of $\mH$ is
\[
    H(z,w) = \frac{1}{\mathfrak{a}} w^{\ell} z^k.
\]
A graph $G$ with $n$ vertices and one $H$-subgraph distinguished
can be decomposed as an $H$-graph and a set of isolated vertices,
plus some edges.
Applying the Symbolic Method,
the generating function of an $H$-graph and a set of isolated vertices is
\[
    H(z,w) e^z.
\]
If we assume that there are $n$ vertices,
we extract the coefficient in $z$ and obtain
\[
    n! [z^n] H(z,w) e^z.
\]
Then each pair of the $n$ vertices can be linked by an edge,
except the pairs already linked in the $H$-graph.
Thus, the number of edges that can be added is $\binom{n}{2} - \ell$.
For each of those, we decide either to add it,
or to not add it.
Thus, the generating function of graphs on $n$ vertices
with a distinguished $H$-graph, additional vertices,
and additional edges, is
\[
    n! [z^n] H(z,w) e^z (1+w)^{\binom{n}{2} - \ell}.
\]
Replacing $H(z,w)$ by its expression, we obtain
\[
    n! [z^n] H(z,w) e^z (1+w)^{\binom{n}{2} - \ell}
    =
    n! [z^n] \frac{1}{\mathfrak{a}} w^{\ell} z^k e^z (1+w)^{\binom{n}{2} - \ell}
    =
    n! [z^n] H \left(z, \frac{w}{1+w} \right) e^z (1+w)^{\binom{n}{2}}.
\]
Finally, we fixe the number of edges to $m$,
and obtain for the number of $(n,m)$-graphs
where one $H$-graph is distinguished the formula
\[
    n! [z^n w^m] H \left(z, \frac{w}{1+w} \right) e^z (1+w)^{\binom{n}{2}}.
\]
Now if $\mF$ is a general graph family,
its generating function is a sum,
for each graph $H$ that has at least one isomorphic copy in $\mF$,
of the generating function of the $H$-graphs
\[
    F(z,w) =
    \sum_{\text{there is an $H$-graph in $\mF$}}
    H(z,w).
\]
By linearity, we obtain for the number of $(n,m)$-graph
where one $\mF$-graph is distinguished the formula
\[
    \sum_{\text{there is a $H$-graph in $\mF$}}
    n! [z^n w^m] H \left(z, \frac{w}{1+w} \right) e^z (1+w)^{\binom{n}{2}}
    =
    n! [z^n w^m] F \left(z, \frac{w}{1+w} \right) e^z (1+w)^{\binom{n}{2}}.
\]

    \paragraph{Asymptotics.}

We now apply a bivariate saddle-point method
to extract the asymptotics (see \cite{FS09}).
In the previous expression, we apply the changes of variables
\[
	z \to n z, \qquad w \to \frac{m}{\binom{n}{2}} w,
\]
and obtain
\[
	\frac{n!}{n^n} \frac{\binom{n}{2}^m}{m^m}
    [z^n w^m]
    F \left(n z, \frac{m w / \binom{n}{2}}{1 + m w / \binom{n}{2}} \right)
    e^{n z} \left( 1 + \frac{m}{\binom{n}{2} w } \right)^{\binom{n}{2}}.
\]
Since the function
\[
    \frac{F \left(nz, m w / \binom{n}{2} \right)}{F\left(n, m / \binom{n}{2} \right)}
\]
converges uniformly to an analytic function $L(z,w)$,
we have $L(1,1) = 1$.
Furthermore, the function
\[
    \frac{F \left(nz, \frac{m w / \binom{n}{2}}{1 + m w / \binom{n}{2}} \right)}{F\left(n, m / \binom{n}{2} \right)}.
\]
converges uniformly to $L(z,w)$ as well,
because $m = \smallo \left(\binom{n}{2} \right)$.
Thus, there exists a sequence of analytic functions $(\epsilon_n(z,w))_{n \geq 0}$
converging uniformly to $0$ such that
\[
	F \left(nz, \frac{m w / \binom{n}{2}}{1 + m w / \binom{n}{2}} \right) =
    F\left(n, \frac{m}{\binom{n}{2}} \right)
    \left( L(z,w) + \epsilon_n(z,w) \right).
\]
The expression of the number of $(n,m)$-graphs
with one $\mF$-subgraph distinguished becomes
\[
	\frac{n!}{n^n} \frac{\binom{n}{2}^m}{m^m}
    F\left(n, \frac{m}{\binom{n}{2}} \right)
    [z^n w^m]
    \left( L(z,w) + \epsilon_n(z,w) \right)
    e^{n z} \left( 1 + \frac{m}{\binom{n}{2}} w \right)^{\binom{n}{2}}.
\]
By an $\exp-\log$ argument,
there exists a sequence of analytic functions $(\tilde{\epsilon}_n(w))_{n \geq 0})$
converging uniformly to $0$ such that
\[
	\left( 1 + \frac{m}{\binom{n}{2}} w \right)^{\binom{n}{2}} =
    e^{m w} ( 1 + \tilde{\epsilon}_n(w) ),
\]
so the expression becomes
\[
	\frac{n!}{n^n} \frac{\binom{n}{2}^m}{m^m}
    F\left(n, \frac{m}{\binom{n}{2}} \right)
    [z^n w^m]
    \left( L(z,w) + \epsilon_n(z,w) \right) (1 + \tilde{\epsilon}(w))
    e^{n z} e^{m w}.
\]
According to the saddle-point method,
the asymptotics is the same as the asymptotics of
\[
	\frac{n!}{n^n} \frac{\binom{n}{2}^m}{m^m}
    F\left(n, \frac{m}{\binom{n}{2}} \right)
    [z^n w^m]
    L(1,1)
    e^{n z} e^{m w}
    \sim
    \binom{\binom{n}{2}}{m}
    F\left(n, \frac{m}{\binom{n}{2}} \right).
\]

		\section{Proof of Corollary~\ref{th:distinguished_esp}}

Let $G$ denote a random $(n,m)$-graph.
If $H$ is a subgraph of $F$,
then $G$ contains $F$ only if it contains $H$, so
\[
	\esp(G[F]) \leq \esp(G[H]).
\]
Assume that $H$ is a densest subgraph of $F$,
with $k^{\star}$ vertices, $\ell^{\star}$ edges,
and a group of symmetries of size $\mathfrak{a}$.
Then there exist $k^{\star}!/\mathfrak{a}$ $H$-graphs,
so the generating function of the $H$-graphs is
\[
	H(z,w) = \frac{1}{\mathfrak{a}} w^{\ell^{\star}} z^{k^{\star}}.
\]
The expected number of $H$-subgraphs
in a random $(n,m)$-graph is equal
to the number of $(n,m)$-graphs
where one $H$-subgraph is distinguished,
divided by the total number $\binom{\binom{n}{2}}{m}$
of $(n,m)$-graphs.
Applying Theorem~\ref{th:distinguished},
we obtain, for $m = \bigO(n^{\alpha})$ with $0 < \alpha < 2$,
\[
	\esp(G[H]) \sim H \left(n, \frac{m}{\binom{n}{2}} \right) =
    \bigO \left(n^{\ell^{\star} ( \alpha-2+k^{\star} / \ell^{\star})} \right)
\]

%	\subsection{Proof of Theorem~\ref{th:subgraphs_limit}}

    \section{Properties of $\chi$}

Consider the random variable $X$ that takes the value $d$
with probability proportional to $\frac{x^d}{d!}$,
for each $d$ from $D$.
Then
\[
    \proba(X=d) = \frac{1}{\Delta(x)} \frac{x^d}{d!}, \quad
    \esp(X) = \frac{x \Delta'(x)}{\Delta(x)}, \quad
    \esp(X(X-1)) = \left(\frac{x \Delta'(x)}{\Delta(x)} \right)'.
\]
Thus, the function $\frac{x \Delta'(x)}{\Delta(x)}$ is strictly increasing,
and it maps $[0,+\infty[$ to $[\min(D), \max(D)[$.
Conversely, when $\frac{2m}{n}$ has a limit in $]\min(D), \max(D)[$,
then $\chi$, defined implicitly by the relation
\[
    \frac{\chi \Delta'(\chi)}{\Delta(\chi)} = \frac{2m}{n},
\]
has a positive limit.

	\section{Proof of Theorem~\ref{th:distinguished_degree_constraints}}

	\paragraph{First part of the theorem.}

For completeness, we start by recalling a result from \cite{EdPR16}.
Each edge of a multigraph is oriented and labeled.
Thus, is can be represented as a triplet $(u,v,e)$,
where $u$ and $v$ denote the two linked vertices,
and $e$ the label of the edge.
The edge can then be cut into two labeled half-edges,
the first one hanging from $u$ and wearing the label $2e-1$,
the second one hanging from $v$ and wearing the label $2e$.
Cutting all the edges of a multigraph into half-edges,
we obtain a bijection between
the $(n,m,D)$-multigraphs,
and the sets of $n$ labeled vertices,
each coming with a set of size in $D$ of labeled half-edges,
such that the total number of half-edges is $2m$.
Thus the number of $(n,m,D)$-multigraphs is
\[
	\mg_{n,m,D} = (2m)! [x^{2m}] \Delta(x)^n.
\]

We now combine this half-edges construction
with the proof of Theorem~\ref{th:distinguished}.
Let $H$ denote a multigraph from $\mF$,
with $k$ vertices and $\ell$ edges,
and assume its automorphism group (both on vertices and edges) has size $\mathfrak{a}$.
Then the number of $H$-multigraphs is $2^{\ell} \ell! k! / \mathfrak{a}$,
and the generating function of the $H$-multigraphs is
\[
	H(z,w) = \frac{1}{\mathfrak{a}} w^k z^{\ell}.
\]
An $(n,m,D)$-multigraph where an $H$-subgraph is distinguished
can be uniquely decomposed as an $H$-multigraph,
a set of additional vertices,
and a set of labeled half-edges, each linked to a vertex,
and so that the number of half-edges and edges on each vertex
is an integer from the set $D$.
Hence, the number of half-edges attached to one of the additional vertices is in $D$,
while the number of half-edges attached to a vertex of degree $d$ from the $H$-multigraph
is in the set $D$, shifted by $-d$.
The generating function of multigraphs with degrees in $D$,
where one $H$-subgraph is distinguished, is then
\[
	\sum_{j \geq 0} (2j)!
    [x^{2j}] H(z,w,(\Delta(x), \Delta'(x), \ldots))
    e^{z \Delta(x)} \frac{w^j}{2^j j!}.
\]
Using the decomposition
\[
    F(z,w) =
    \sum_{\text{there is an $H$-graph in $\mF$}}
    H(z,w).
\]
and extracting the coefficient $n! 2^m m! [z^n w^m]$
concludes the proof of the first part of the theorem.

	\paragraph{Asymptotics.}

Using the classic formula
\[
	\frac{1}{\sqrt{2\pi}}
    \int_{-\infty}^{+\infty}
    t^{k} e^{-t^2/2} dt =
    \begin{cases}
    0 & \text{if $k$ is odd},\\
    \frac{(2m)!}{2^m m!} & \text{if $k = 2m$},
    \end{cases}
\]
we rewrite the sum of the previous expression as an integral
\begin{eqnarray*}
	&&\sum_{j \geq 0} (2j)!
    [x^{2j}] F(z,w,(\Delta(x), \Delta'(x), \ldots))
    e^{z \Delta(x)} \frac{w^j}{2^j j!}
    \\ \;\; &=&
    \sum_{j \geq 0}
    \frac{1}{\sqrt{2\pi}}
    \int_{-\infty}^{+\infty}
    w^j t^{2j} e^{-t^2/2} dt
    [x^{2j}] F(z,w,(\Delta(x), \Delta'(x), \ldots))
    e^{z \Delta(x)}
    \\ \;\; &=&
    \frac{1}{\sqrt{2\pi}}
    \int_{-\infty}^{+\infty}
    F(z,w,(\Delta(\sqrt{w} t), \Delta'(\sqrt{w} t), \ldots))
    e^{z \Delta(\sqrt{w} t)}
    e^{-t^2/2} dt,
\end{eqnarray*}
where switching the sum and the integral is licit
because we are working with entire analytic functions.
To obtain the number of $(n,m,D)$-multigraphs
where one $\mF$-subgraph is distinguished,
we extract the coefficient $[z^n w^m]$ from the previous expression
and multiply by $n! 2^m m!$
\[
	\frac{n! 2^m m!}{\sqrt{2\pi}} [z^n w^m]
    \int_{-\infty}^{+\infty}
    F(z,w,(\Delta(\sqrt{w} t), \Delta'(\sqrt{w} t), \ldots))
    e^{z \Delta(\sqrt{w} t)}
    e^{-t^2/2} dt.
\]
To simplify the saddle-point method, we apply successively the following changes of variables
\[
	z \to \frac{n z}{\Delta(\sqrt{w} t)},
    \qquad
    w = (x /t)^2,
    \qquad
    t \to \sqrt{2m} t.
\]
The expression becomes
\[
	\frac{n!}{n^n}
    2^m m! (2m)^{m+1/2}
    [z^n x^{2m}]
    \frac{1}{\sqrt{2\pi}}
    \int_{-\infty}^{+\infty}
    F\left( \frac{n z}{\Delta(x)} , \frac{x^2}{2m t^2},(\Delta(x), \Delta'(x), \ldots) \right)
    e^{n z}
    \Delta(x)^n
    t^{2m}
    e^{-m t^2} dt.
\]
The result follows from the saddle-point and Laplace methods.

    \section{Proof of Theorem~\ref{th:graphs_degree_constraints}}

    \paragraph{Second point.}

The generating function of cycles (graphs or multigraphs) of length $j$ is
\[
    C_j(z,w) = \frac{1}{2j} \delta_2^j w^j z^j.
\]
Observe that cycles of length $1$ are loops,
and cycles of length $2$ are double edges.
Corollary~\ref{th:distinguished_esp_degree_constraints}
proves that in asymptotically almost all $(n,m,D)$-multigraphs,
the cycles of length at most $k$ are disjoint.
Thus, the generating function of patchworks of cycles,
where the variable $u_j$ marks the cycles of length $j$,
can be approximated as
\[
    e^{\sum_{j=1}^k u_j C_j(z,w,(\delta_0, \delta_1, \ldots))} =
    e^{\sum_{j=1}^k \frac{u_j}{2j} (\delta_2 w z)^j)}.
\]
Applying Theorem~\ref{th:distinguished_degree_constraints},
the asymptotic number of $(n,m,D)$-multigraphs that contain exactly $t_j$
cycles of length $j$ for all $1 \leq j \leq k$ is
\[
    \mg_{n,m,D} \bigg[ \prod_{j=1}^k u_j^{t_j} \bigg]
    e^{\sum_{j=1}^k \frac{u_j-1}{2j}
    \left( \frac{\chi^2 \Delta''(\chi)}{\Delta(\chi)}
    \frac{n}{2m} \right)^j}.
\]
Choosing $t_1 = t_2 = 0$,
we obtain $(n,m,D)$-multigraphs without loops and double edges.
Each $(n,m,D)$-graph containing $t_j$ cycles of length $j$ for all $3 \leq j \leq k$,
matches exactly $2^m m!$ such multigraphs. So their asymptotic  number is
\[
    \mg_{n,m,D}
    e^{-\frac{1}{2}
    \frac{\chi^2 \Delta''(\chi)}{\Delta(\chi)}
    \frac{n}{2m}
    -\frac{1}{4}
    \left( \frac{\chi^2 \Delta''(\chi)}{\Delta(\chi)}
    \frac{n}{2m} \right)^2}
    \bigg[ \prod_{j=3}^k u_j^{t_j} \bigg]
    e^{\sum_{j=1}^k \frac{u_j-1}{2j}
    \left( \frac{\chi^2 \Delta''(\chi)}{\Delta(\chi)}
    \frac{n}{2m} \right)^j},
\]
where
\[
    \frac{\mg_{n,m,D}}{2^m m!}
    e^{-\frac{1}{2}
    \frac{\chi^2 \Delta''(\chi)}{\Delta(\chi)}
    \frac{n}{2m}
    -\frac{1}{4}
    \left( \frac{\chi^2 \Delta''(\chi)}{\Delta(\chi)}
    \frac{n}{2m} \right)^2}
\]
is equal to the number of $(n,m,D)$-graphs (are result already derived by \cite{EdPR16}).
Hence, the limit probability for a random $(n,m,D)$-graph to contain exactly
$t_j$ cycles of length $j$, for all $3 \leq j \leq k$, is
\[
    \prod_{j=3}^k \frac{\lambda_j^{t_j}}{t_j!} e^{-\lambda_j},
    \qquad \text{ where } \qquad
    \lambda_j = \frac{1}{2j}
    \left( \frac{\chi^2 \Delta''(\chi)}{\Delta(\chi)}
    \frac{n}{2m} \right)^j.
\]

    \paragraph{First point.}

If the connected multigraph $F$ is neither a tree nor a cycle,
then its essential density is greater than $1$.
Corollary~\ref{th:distinguished_esp_degree_constraints} implies that a random $(n,m,D)$-multigraph
asymptotically almost surely contains no copy of $F$.
We saw in the previous paragraph that the probability,
for a random $(n,m,D)$-multigraph,
to contain no loops or double edge
has a positive limit
\[
    e^{-\frac{1}{2}
    \frac{\chi^2 \Delta''(\chi)}{\Delta(\chi)}
    \frac{n}{2m}
    -\frac{1}{4}
    \left( \frac{\chi^2 \Delta''(\chi)}{\Delta(\chi)}
    \frac{n}{2m} \right)^2}.
\]
Since almost all $(n,m,D)$-multigraphs contain no $F$-sub(multi)graph,
almost all $(n,m,D)$-graphs contain no $F$-subgraph as well.

\end{document}